\newtheorem{theorem}{Theorem}[section]
\newtheorem{lemma}[theorem]{Lemma}
\newtheorem*{Acknowledgement}{\textnormal{\textbf{Acknowledgement}}}
\theoremstyle{definition}
\newtheorem{definition}[theorem]{Definition}
\numberwithin{equation}{section}
\newcommand{\beqa}{\begin{eqnarray*}}
	\newcommand{\eeqa}{\end{eqnarray*}}
\newcommand{\beqn}{\begin{eqnarray}}
	\newcommand{\eeqn}{\end{eqnarray}}
\renewcommand{\a}{\alpha}
\newcounter{cnt1}
\newcounter{cnt2}
\newcounter{cnt3}
\newcommand{\blr}{\begin{list}{$($\roman{cnt1}$)$}
		{\usecounter{cnt1} \setlength{\topsep}{0pt}
			\setlength{\itemsep}{0pt}}}
	\newcommand{\bla}{\begin{list}{$($\alph{cnt2}$)$}
			{\usecounter{cnt2} \setlength{\topsep}{0pt}
				\setlength{\itemsep}{0pt}}}
		\newcommand{\bln}{\begin{list}{$($\arabic{cnt3}$)$}
				{\usecounter{cnt3} \setlength{\topsep}{0pt}
					\setlength{\itemsep}{0pt}}}
			\newcommand{\el}{\end{list}}
		\newtheorem{thm}{Theorem}
		\newtheorem{Def}[thm]{Definition}
		\newtheorem{rem}[thm]{Remark}
		\newcommand{\Rem}{\begin{rem} \rm}
			\newcommand{\bdfn}{\begin{Def} \rm}
				\newcommand{\edfn}{\end{Def}}
			\title{Nonrough norms in Lipschitz free spaces}  
			\author[ S. Basu , S. Seal ]
			{Sudeshna Basu$^{1}$, Susmita Seal$ ^{2}$ }
			\address{{$^{1}$}   Sudeshna Basu,
				Department of Mathematics and Statistics, 
				Loyola University, 
				Baltimore, MD 21210, USA 
			}
			\email{sudeshnamelody@gmail.com}
			\address {{$^{2}$} Susmita Seal, 
				Department of Mathematics, ,
				Ramakrishna Mission Vivekananda Educational and Research Institute , 
				Belur Math,  Howrah 711202,
				West Bengal, India}
			\email{susmitaseal1996@gmail.com}
			\subjclass{46B20, 46B28}
			\keywords{Slices,  Lipschitz free space, Dentable, non-rough norms.}
			\date{}
\begin{document}
\maketitle
\begin{abstract}
In this paper we characterise nonrough norms of Lipschitz free spaces $\mathcal{F}(M)$
 in terms of a new geometric property of the underlying metric space M. 
\end{abstract}

\section{Introduction}
Let $X$ be a {\it real} nontrivial Banach space and $X^*$ denotes the dual of $X$. We denote the closed unit ball by $B_X$, the unit sphere by $S_X$ and the closed ball of radius $r >0$ and center $x$ by $B_X(x, r).$  For bounded subset $C$ of $X,$  slice of $C$ is determined by $$S(C, f, \a) = \{x \in C : f(x) > \mbox{sup}~ f(C) - \a \},$$ where $x^*\in X^*$ and $\a > 0.$ We assume without loss of generality that $\|x^*\| = 1$. Analogously one can define $w^*$-slices in $X^*$. A subset $F$ in $S_{X^*}$ is said to be norming for $X^*$ if $\|x^*\|=\sup_{x\in F} x^*(x)$ for every $x^*\in X^*.$

Given a pointed metric space $M$, i.e., a metric space with a designated origin 0, 
 set  
$$Lip_0(M)=\{f:M\rightarrow \mathbb{R} \ \mathrm{Lipschitz} : f(0)=0\}$$ is a Banach space under the Lipschitz norm $\|f\|=\sup \Big\{ \frac{|f(x)-f(y)|}{d(x,y)} : x\neq y\Big\}.$ For each $m\in M,$ define $\delta_m : Lip_0(M) \rightarrow \mathbb{R}$ by $\delta_m (f) = f(m).$ Then $\delta_m \in Lip_0(M)^*$ with $\|\delta_m\|$ $=d(0,m).$ If we consider $\mathcal{F}(M)=\overline{\mathrm{span}}\{\delta_m : m\in M\}\subset Lip_0(M)^*,$ then it follows that\\ $\mathcal{F}(M)^*= Lip_0(M).$ Also the set $\Big \{ \frac{\delta_x - \delta_y}{d(x,y)}: x\neq y, x,y\in M\Big\}\subset S_{\mathcal{F}(M)}$ is norming for $Lip_0(M)$ and hence $B_{\mathcal{F}(M)}=\overline{co} \Big \{ \frac{\delta_x - \delta_y}{d(x,y)}: x\neq y, x,y\in M\Big\}.$
 The space $\mathcal{F}(M)$ is called Lipschitz-free space over $M.$ 
 If $\mu = \sum_{i=1}^{n} c_i \delta_{x_i}$ where $x_i\in M\setminus \{0\}$ and $c_i \neq 0$ for all $i=1,\ldots,n,$ we will denote supp($\mu$) = $\{x_1,\ldots,,x_n\}.$ Also we call the name molecule for the elements of $\mathcal{F}(M)$ of the form $\frac{\delta_x -\delta_y}{d(x,y)}$ where $x,y\in M$ with $x\neq y.$ 
  For more details on Lipschitz-free spaces see \cite{G}, \cite{W}. 
 
For a Banach space $X,$ we say norm of $X$ is G$\hat{a}$teaux differentiable at $x\in X,$ if
 \begin{equation}\label{gateaux lim}
\lim\limits_{t\rightarrow 0} \frac{\|x+th\|-\|x\|}{t}
\end{equation}
exists for every $h\in X.$ Moreover, if limit ($\ref{gateaux lim}$) exists uniformly for $h\in S_X,$ then norm of $X$ is called Fr$\acute{e}$chet differentiable at $x\in X.$ From Smulyan's Lemma \cite{DGZ} it is known that norm of $X$ is Fr$\acute{e}$chet differentiable at $x$ if and only if $\inf_{\alpha>0} \mathrm{diam} (S(B_{X^*},x,\alpha))=0.$
In \cite{AZ} authors characterised when the norm of $\mathcal{F}(M)$ is Gateaux differentiable at a convex series of molecules $\mu=\sum_n \lambda_n \frac{\delta_{x_n} -\delta_{y_n}}{d(x_n,y_n)}$ with $\|\mu\|=1,$ in terms of geometric conditions on the points $x_n,y_n$ of the underlying metric spaces. Even more they showed that over a uniformly discrete bounded metric space $M,$ norm of $\mathcal{F}(M)$ is Frechet differentiable at a finitely supported elements of $\mathcal{F}(M)$ if and only if norm of $\mathcal{F}(M)$ is Gateaux differentiable there.  

 For $u\in S_X$ 
$\eta(X,u)=\limsup\limits_{\Vert h\Vert\rightarrow 0} \frac{\Vert u+h\Vert+\Vert u-h\Vert-2}{\Vert h\Vert}$ defines the roughness of $X$ at $u.$
	For $\varepsilon>0$, norm of $X$ is said to be
	$\varepsilon$-rough, if $\eta(X,u)\geq \varepsilon$ for every $u\in S_X$. We say that norm of $X$ is rough,
	 if it is $\varepsilon$-rough for some $\varepsilon >0$ and it is non-rough otherwise. It is known that if norm of $X$ is Frechet differentiable at $x\in S_X,$ then norm of $X$ is non-rough. But converse is not true in general. In \cite{JZ} there is an example of Banach space with nonrough norm having no point of Gateaux differentiability. In this paper our aim is to explore non-rough norms of Lipschitz free spaces $\mathcal{F}(M)$ in terms of geometric properties of the underlying metric space $M.$
	 
To prove our aim 
let us recall the following three properties already studied in \cite{Ba}, \cite{BR} and \cite{BS}. 
 
\begin{definition}
A Banach space $X$ has 
\begin{enumerate}
	
	\item  Ball Dentable Property ($BDP$) if $B_X$ has  
	slices of arbitrarily small diameter. 
	
	\item  Ball Huskable Property ($BHP$) if $B_X$ 
	has nonempty relatively weakly open subsets of arbitrarily small diameter.
	
	\item  Ball Small Combination of Slice Property ($BSCSP$) 
	if  $B_X$ has convex combination of slices of arbitrarily 
	small diameter.
\end{enumerate}
\end{definition}
One can analogously define $w^*$-$BDP$, $w^{*}$-$BHP$ and $w^{*}$-$BSCSP$ in a dual space. The implications among all these properties are clearly described in the following diagram. 
$$ BDP \Longrightarrow \quad BHP \Longrightarrow \quad  BSCSP$$ $ \hspace{4.5 cm} \Big \Uparrow \quad \quad\quad\quad\quad \Big \Uparrow \quad \quad\quad\quad\quad \Big \Uparrow$  $$ w^*BDP \Longrightarrow  w^*BHP \Longrightarrow  w^*BSCSP$$

but reverse implications are not true in general \cite{BS}. 
	 It is known that $X$ is non-rough if and only if $X^*$ has $w^*$-BDP \cite{DGZ}, \cite{SBBV}. 
Also from \cite{BGLPRZ} it is known that if an infinite pointed metric space $M$ is unbounded or is not uniformly discrete then any convex combination of slices of $B_{\mathcal{F}(M)^*}$ has diameter two. Therefore, to get the norm of $\mathcal{F}(M)$ is non-rough, we must restrict ourselves on uniformly discrete, bounded pointed metric space. 
In last 4-5 years intensive effort have been done in order to characterise the related concepts like denting points, strongly exposed points, extreme points of $B_{\mathcal{F}(M)}$ in terms of geometric properties of the underlying metric space. For interested readers we refer \cite{AP}, \cite{APPP}, \cite{LPPRZ},\cite{LPRZ}.

\section{main results}
Let us first recall a result from \cite{AZ}.
\begin{lemma}\label{lem az}
\cite[Lemma 2.2]{AZ} Let $I$ be a (finite or infinite) subset of $\mathbb{N}$ and $\beta_{jk}$ (where $j,k\in I$) be real numbers such that $\beta_{jj}=0$ for all $j\in I.$ Then the following are equivalent.
\begin{enumerate}
\item There exist real numbers $\alpha_j$ (where $j\in I$) such that  for all $j,k\in I$
\begin{equation} \label{equ lem az}
\alpha_k\leqslant \alpha_j+\beta_{kj}.
\end{equation}
\item For every finite sequence $i_1,\ldots,i_m$ of indices we have 
\begin{equation}
\beta_{i_1 i_2}+\beta_{i_2 i_3}+\ldots+\beta_{i_{m-1} i_m}+\beta_{i_m i_1}\geqslant 0.
\end{equation}
\end{enumerate}
Moreover, $\alpha_j$ are unique up to additive constant if and only if for every different pair $j,k\in I$ and every $\varepsilon >0$ there exist finite sequence $i_1,\ldots,i_m$ of indices that contain both $j$ and $k$ and we have 
\begin{equation}
\beta_{i_1 i_2}+\beta_{i_2 i_3}+\ldots+\beta_{i_{m-1} i_m}+\beta_{i_m i_1}\leqslant \varepsilon .
\end{equation}
\end{lemma}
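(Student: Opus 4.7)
The plan is to recognize this as a classical no-negative-cycle characterization on a weighted directed graph: view each ordered pair $(i,j)$ as a directed edge $i\to j$ of weight $\beta_{ij}$, so that the sum in (2) is exactly the weight of the directed cycle $i_1\to i_2\to\cdots\to i_m\to i_1$, and (2) reads as ``no negative cycle''. The hypothesis $\beta_{jj}=0$ just says that each loop contributes weight zero.

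The implication (1)\,$\Rightarrow$\,(2) is the easy direction: rewrite (\ref{equ lem az}) as $\beta_{i_r i_{r+1}}\geqslant \alpha_{i_r}-\alpha_{i_{r+1}}$, then sum over $r=1,\ldots,m$ with indices taken cyclically, and the right-hand side telescopes to $0$.

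For (2)\,$\Rightarrow$\,(1) I would fix a basepoint $j_0\in I$ and define
\[
\alpha_j \;:=\; \inf\bigl\{\beta_{j i_1}+\beta_{i_1 i_2}+\cdots+\beta_{i_{n-1}j_0} : n\geqslant 1,\; i_1,\ldots,i_{n-1}\in I\bigr\},
\]
the infimum of weights of directed paths from $j$ to $j_0$ (the $n=1$ case being the single edge of weight $\beta_{jj_0}$). Finiteness follows from the hypothesis: the single-edge path gives $\alpha_j\leqslant \beta_{jj_0}$, while concatenating any $j$-to-$j_0$ path with the single edge $j_0\to j$ produces a cycle of nonnegative weight, forcing each such path weight to be at least $-\beta_{j_0 j}$. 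The required inequality $\alpha_k\leqslant \alpha_j+\beta_{kj}$ is then a triangle inequality, obtained by prefixing any $j$-to-$j_0$ path with the edge $k\to j$.

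For the uniqueness clause, introduce the shortest-path cost $d(k,j):=\inf\{\beta_{k i_1}+\cdots+\beta_{i_{n-1} j}\}$. Any $\alpha$ satisfying (1) must obey $-d(j,k)\leqslant \alpha_k-\alpha_j\leqslant d(k,j)$, by summing (\ref{equ lem az}) along an optimal path in each direction. The cycles-through-both condition is precisely $d(k,j)+d(j,k)=0$ for every pair $j,k$, which collapses these bounds and pins every difference $\alpha_k-\alpha_j$ to a unique value, giving uniqueness up to additive constants. Conversely, if $d(j,k)+d(k,j)>0$ for some pair, the two candidate solutions $\alpha^{(1)}_i:=d(i,j)$ and $\alpha^{(2)}_i:=-d(j,i)$ produce the two distinct values $d(k,j)$ and $-d(j,k)$ for $\alpha_k-\alpha_j$, so they cannot differ by a constant.

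The main obstacle I expect is keeping the directional bookkeeping straight: a single slip between $(k,j)$ and $(j,k)$, or between source and sink, flips the sign of the cycle condition and breaks the whole argument. The one genuinely non-routine verification is that $\alpha^{(2)}=-d(j,\cdot)$ really is a valid solution, which uses the triangle inequality $d(j,\ell)\leqslant d(j,k)+\beta_{k\ell}$ in the opposite direction from that used for $\alpha^{(1)}$.
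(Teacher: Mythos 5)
Your proof is correct. Note that the paper does not prove this lemma itself --- it is quoted from \cite[Lemma 2.2]{AZ} --- but your shortest-path costs $d(r,s)$ are exactly the quantities $B_{rs}=\inf\{\beta_{i_1 i_2}+\ldots+\beta_{i_{m-1}i_m}: i_1=r,\ i_m=s\}$ that the paper later extracts from the proof in \cite{AZ} (together with $B_{tt}=0$, the triangle inequality $B_{rs}+B_{st}\geqslant B_{rt}$, and the bound $\alpha_j-\alpha_k\leqslant B_{jk}$), so your argument follows essentially the same route as the source proof that the rest of the paper relies on.
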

By using some techniques from Lemma $\ref{lem az},$ we now prove our next Lemma that gives a general version of Lemma $\ref{lem az}.$
 \begin{lemma}\label{gen az lem}
Let $\varepsilon \geqslant 0,$ $I$ be a (finite or infinite) subset of $\mathbb{N}$ and $\beta_{jk}$ (where $j,k\in I$) be real numbers such that $\beta_{jj}=0$ for all $j\in I.$
Also let there exist atleast one sequence of real numbers $(\alpha_i)_i$  such that $\alpha_k\leqslant \alpha_j+\beta_{kj}$ holds  $\forall j,k\in I.$
 Then the following are equivalent.
  \begin{enumerate}
  \item For any two real sequences $(\alpha '_i)_{i\in I}$ and $(\alpha ''_i)_{i\in I}$ with $\alpha '_k\leqslant \alpha '_j+\beta_{kj}$ and $\alpha ''_k\leqslant \alpha ''_j+\beta_{kj}$ $\forall j,k\in I,$ 
  we have
 \begin{equation}
 |(\alpha '_j - \alpha '_k) -(\alpha ''_j -\alpha ''_k)|\leqslant\varepsilon  \quad \forall j,k\in I
 \end{equation}
\item For every different pair $j,k \in I$ and $\delta >0$ there exist finite sequence of different numbers $i_1,\ldots,i_m$ of indices that contain both $j$ and $k$ and we have 
\begin{equation}
\beta_{i_1 i_2}+\beta_{i_2 i_3}+\ldots+\beta_{i_{m-1} i_m}+\beta_{i_m i_1}< \varepsilon + \delta.
\end{equation}
  \end{enumerate}
 \end{lemma}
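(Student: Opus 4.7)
The strategy is to prove the two implications separately, following the pattern of the ``moreover'' part of Lemma~\ref{lem az} but quantified by $\varepsilon$.

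For $(2) \Rightarrow (1)$, I would fix valid sequences $(\alpha'_i), (\alpha''_i)$ and indices $j \ne k$, and for each $\delta > 0$ invoke (2) to obtain a finite sequence $i_1, \ldots, i_m$ through $j = i_a$ and $k = i_b$ with $\sum_{\ell} \beta_{i_\ell i_{\ell+1}} < \varepsilon + \delta$. Split this cyclic sum as $P + Q$, where $P$ runs along the arc from $i_a$ forward to $i_b$ and $Q$ along the complementary arc. Telescoping the defining inequality $\alpha'_{i_s} - \alpha'_{i_{s+1}} \le \beta_{i_s i_{s+1}}$ along each arc yields $\alpha'_j - \alpha'_k \le P$ and $\alpha'_k - \alpha'_j \le Q$, and the same two bounds for $\alpha''$. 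Subtracting corresponding pairs gives
$$
|(\alpha'_j - \alpha'_k) - (\alpha''_j - \alpha''_k)| \le P + Q < \varepsilon + \delta,
$$
and letting $\delta \to 0$ gives (1).

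For $(1) \Rightarrow (2)$ the approach is to construct two \emph{extremal} valid sequences via ``shortest-path'' costs on $I$ and then apply (1) to them. Fix $j \ne k$ and $\delta > 0$, and define
$$
f(l) := \inf\Big\{\sum_{s=0}^{r-1} \beta_{l_{s+1}\, l_s} : r \ge 0,\ l_0 = j,\ l_r = l,\ l_s \in I\Big\},
$$
and analogously $g(l)$ as the infimum over paths $l_0 = l,\, l_1,\ldots, l_r = j$. The hypothesized existence of some valid $(\alpha_i)$ together with Lemma~\ref{lem az} ensures both that every closed-chain $\beta$-sum is nonnegative and that $f, g > -\infty$ (each path cost is bounded below by the telescoping of that $\alpha$). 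A one-step path-extension check shows that $(f(l))_l$ and $(-g(l))_l$ (note the sign flip on $g$) are both valid sequences. Applying (1) to the pair $\alpha' = f$, $\alpha'' = -g$, and using $f(j) = g(j) = 0$, produces
$$
|f(k) + g(k)| \le \varepsilon.
$$
By construction $f(k) + g(k)$ is the infimum of cycle $\beta$-sums through $j$ and $k$, so the cycle condition forces $f(k) + g(k) \ge 0$; thus $0 \le f(k) + g(k) \le \varepsilon$. The definition of infimum then yields, for the prescribed $\delta$, a path $j \to \cdots \to k$ and a path $k \to \cdots \to j$ whose concatenation is a finite cyclic sequence through $j$ and $k$ with total $\beta$-sum $< \varepsilon + \delta$, proving (2).

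\textbf{Main obstacle.} The delicate step is choosing the right extremal pair $(f, -g)$ in $(1) \Rightarrow (2)$: one must verify that both are valid sequences (with particular care for the sign convention on $g$, so that the validity inequality has the subscripts in the right order) and identify $f(k) + g(k)$ with the infimum of cycle costs through $j$ and $k$. Once this setup is in place, condition (1) applied to this specific pair delivers the bound $\le \varepsilon$ essentially for free, and the direction $(2) \Rightarrow (1)$ reduces to bookkeeping along the cycle.
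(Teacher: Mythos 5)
Your proof is correct and takes essentially the same route as the paper's: your shortest-path costs $f(l)$ and $g(l)$ are (up to reversing the index convention and swapping the roles of $j$ and $k$) exactly the paper's $B_{rs}$, your extremal pair $(f,-g)$ is the paper's pair $(B_{rk})_r$, $(-B_{kr})_r$, and the key bound $|f(k)+g(k)|\leqslant\varepsilon$ is the paper's $B_{jk}+B_{kj}\leqslant\varepsilon$, after which both arguments pick near-optimal paths and concatenate. The direction $(2)\Rightarrow(1)$ by telescoping the validity inequality along the two arcs of the given cycle is likewise the paper's argument, and you share with the paper the same unaddressed minor point that the concatenated cycle produced in $(1)\Rightarrow(2)$ is not visibly made up of pairwise different indices.
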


\begin{proof}
For every $r,s\in I$ we first define $$B_{rs}= \inf\{\beta_{i_1 i_2}+\beta_{i_2 i_3}+\ldots+\beta_{i_{m-1} i_m} : i_1=r, i_m=s, i_2,\ldots,i_{m-1} \in I, m\in \mathbb{N}\}.$$
In Lemma $\ref{lem az}$, it was shown that 
\begin{equation}
B_{rs}+B_{st}\geqslant B_{rt} \quad \forall r,s,t\in I
\end{equation}
\begin{equation}
  B_{tt}=0 \quad \forall t\in I.
\end{equation}
$(i)\Rightarrow (ii).$ 
Let $j,k\in I$ with $j\neq k$  and $\delta >0.$ Observe that 
\begin{equation}
B_{rk}\leqslant B_{rs} +B_{sk} \leqslant \beta_{rs} + B_{sk} \quad \forall r,s \in I
\end{equation} 
\begin{equation}
-B_{kr} + \beta_{sr}\geqslant -B_{ks}-B_{sr} +\beta_{sr}\geqslant -B_{ks} \quad \forall r,s I
\end{equation}
Therefore by our assumption 
\begin{equation}
|(B_{rk}-B_{sk})-[-B_{kr}- (-B_{ks})]|\leqslant\varepsilon \quad \forall r,s\in I.
\end{equation}
In particular if we put $r=j$ and $s=k,$ then $B_{jk}+B_{kj}\leqslant\varepsilon.$
Also we can choose finite sums 
\begin{equation}\label{1}
\beta_{j i_2}+\beta_{i_2 i_3}+\ldots+\beta_{i_{m-1} k} < B_{jk}+\frac{\delta}{2}
\end{equation}
\begin{equation}\label{2}
\beta_{k i'_2}+\beta_{i'_2 i'_3}+\ldots+\beta_{i'_{l-1} j} < B_{kj}+\frac{\delta}{2}
\end{equation}
Adding $\ref{1}$ and $\ref{2},$ we get 
\begin{equation}
\beta_{j i_2}+\beta_{i_2 i_3}+\ldots+\beta_{i_{m-1} k}
+
\beta_{k i'_2}+\beta_{i'_2 i'_3}+\ldots+\beta_{i'_{l-1} j}
<\varepsilon + \delta.
\end{equation}
$(ii)\Rightarrow (i).$
Consider two real sequences $(\alpha '_i)_{i\in I}$ and $(\alpha ''_i)_{i\in I}$ such that $\alpha '_k\leqslant \alpha '_j+\beta_{kj}$ and $\alpha ''_k\leqslant \alpha ''_j+\beta_{kj}$ $\forall j,k\in I.$ 
 Let us fix $j,k\in I$ and $\delta>0.$ Then by (ii) there exist finite sequence $i_1,\ldots,i_m$ of different indices that contain both $j$ and $k$ such that 
\begin{equation}
\beta_{i_1 i_2}+\beta_{i_2 i_3}+\ldots+\beta_{i_{m-1} i_m}+\beta_{i_m i_1}< \varepsilon + \delta.
\end{equation}
Let $j=i_p$ and $k=i_q$ for some $p,q\in \{1,\ldots,m\}.$
\begin{equation}
B_{jk} + B_{kj} \leqslant (\beta_{i_p i_{p+1}}+\ldots + \beta_{i_{q-1} i_q}) + (\beta_{i_q i_{q+1}}+ \ldots + \beta_{i_{p-1} i_p}) 
< \varepsilon + \delta.
\end{equation}
Observe that $\alpha '_j - \alpha '_k \leqslant B_{jk}$ [see proof of Lemma $\ref{lem az}$]. Similarly $\alpha '_k - \alpha '_j \leqslant B_{kj}< \varepsilon + \delta - B_{jk}.$  Hence 
\begin{equation}
B_{jk} -\varepsilon -\delta < \alpha '_j - \alpha '_k \leqslant B_{jk}
\end{equation}
Similar is also true for $\alpha ''_j$ and $\alpha ''_k.$  Therefore, $|(\alpha '_j - \alpha '_k) -(\alpha ''_j -\alpha ''_k)|<\varepsilon + \delta.$
Since $\delta >0$ is arbitrary, then $|(\alpha '_j - \alpha '_k) -(\alpha ''_j -\alpha ''_k)|\leqslant\varepsilon.$
\end{proof}

\begin{lemma}\label{finite supp}
Let $\varepsilon >0.$ If every $w^*$ slice of $B_{Lip_0(M)}$ of the form 
$S(B_{Lip_0(M)}, \sum_{i=1}^{n} \lambda_i \frac{\delta_{x_i}-\delta_{y_i}}{d(x_i,y_i)} , \alpha)$ with $\|\sum_{i=1}^{n} \lambda_i \frac{\delta_{x_i}-\delta_{y_i}}{d(x_i,y_i)}\|=1$ and $\alpha>0$ $\Big [$where $(x_1,y_1),\ldots,(x_n,y_n)\in M\times M$ such that $x_i\neq y_i$ for all i and $\lambda_1,\ldots,\lambda_n\in \mathbb{R}^+$ such that $\sum_{i=1}^{n} \lambda_i=1\Big]$
 has diameter greater than $\varepsilon,$ then every $w^*$ slice of $B_{Lip_0(M)}$ has diameter greater than $\varepsilon.$
\end{lemma}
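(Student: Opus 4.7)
The plan is to reduce an arbitrary $w^*$-slice $S(B_{Lip_0(M)},\mu,\alpha)$, with $\mu\in S_{\mathcal{F}(M)}$ and $\alpha>0$, to a slice of the special kind appearing in the hypothesis via a standard approximation-and-inclusion argument for slices. The main work is to produce, for each such $\mu$, a norm-one finite convex combination of molecules that is arbitrarily close to $\mu$ in $\mathcal{F}(M)$.

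Fix $\delta\in(0,\alpha/4)$. Since $\mathcal{F}(M)=\overline{\mathrm{span}}\{\delta_m:m\in M\}$, finitely supported elements are dense, so one can choose a finitely supported $\mu_F$ with $\|\mu-\mu_F\|<\delta$. Setting $\tilde\mu:=\mu_F/\|\mu_F\|$ and using $|\|\mu_F\|-1|\leqslant\|\mu-\mu_F\|$ gives $\|\mu-\tilde\mu\|\leqslant 2\|\mu-\mu_F\|<2\delta$, while $\tilde\mu$ is still finitely supported and lies in $S_{\mathcal{F}(M)}$. To exhibit $\tilde\mu$ as a norm-one finite convex combination of molecules, I would invoke the Kantorovich--Rubinstein description of $\|\cdot\|_{\mathcal{F}(M)}$: for finitely supported elements, the optimal transport plan $(\pi_{ij})$ between the positive and negative parts of $\tilde\mu$ is attained, so $\tilde\mu=\sum_{i,j}\pi_{ij}(\delta_{x_i}-\delta_{y_j})$ and $\sum_{i,j}\pi_{ij}\,d(x_i,y_j)=\|\tilde\mu\|=1$. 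Setting $\lambda_{ij}=\pi_{ij}\,d(x_i,y_j)$ and discarding zero terms (so $x_i\neq y_j$ throughout) yields
\[
\tilde\mu \;=\; \sum_{i,j}\lambda_{ij}\,\frac{\delta_{x_i}-\delta_{y_j}}{d(x_i,y_j)},\qquad \sum_{i,j}\lambda_{ij}=1,\quad \|\tilde\mu\|=1,
\]
exactly the shape required by the hypothesis.

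Applying the hypothesis to $\tilde\mu$ gives $\mathrm{diam}\bigl(S(B_{Lip_0(M)},\tilde\mu,\alpha/2)\bigr)>\varepsilon$. For any $f$ in this inner slice, $\tilde\mu(f)>1-\alpha/2$, so $\mu(f)\geqslant\tilde\mu(f)-\|\mu-\tilde\mu\|>1-\alpha/2-2\delta>1-\alpha$ because $\delta<\alpha/4$; hence $S(B_{Lip_0(M)},\tilde\mu,\alpha/2)\subseteq S(B_{Lip_0(M)},\mu,\alpha)$, and the outer slice inherits diameter greater than $\varepsilon$. The chief obstacle is the Kantorovich--Rubinstein step, namely recognising that any finitely supported norm-one element of $\mathcal{F}(M)$ is literally a finite convex combination of molecules attaining equality in the triangle inequality. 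This rests on the attainability of the optimal transport plan in the finite-support case, which is a standard fact in Lipschitz-free space theory; once it is in place, the remainder is a routine slice manipulation.
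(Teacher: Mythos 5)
Your proposal is correct and follows essentially the same route as the paper: approximate $\mu$ by a finitely supported element, normalize, express the normalized element as a norm-one finite convex combination of molecules, and conclude by slice inclusion. The Kantorovich--Rubinstein attainment step you flag as the chief obstacle is precisely the content of the reference the paper invokes at the same point (Weaver, Proposition 3.16), so the two arguments coincide in substance.
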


\begin{proof}
Consider an arbitrary $w^*$ slice $S(B_{Lip_0(M)}, \mu , \alpha)$ of $B_{Lip_0(M)},$ where $\mu \in S_{\mathcal{F}(M)}$ and $\alpha >0.$ Choose $\varepsilon >0$ such that $\alpha >2\varepsilon.$ Since $B_{\mathcal{F}(M)}=\overline{co} \Big \{ \frac{\delta_x - \delta_y}{d(x,y)}: x\neq y, x,y\in M\Big\},$ then there exist $(x_1,y_1),\ldots,(x_n,y_n)\in M\times M$ with $x_i\neq y_i$ for all i and $\lambda_1,\ldots,\lambda_n\in \mathbb{R}^+$ with $\sum_{i=1}^{n} \lambda_i=1$ such that 
\begin{equation}
\|\mu - \sum_{i=1}^{n} \lambda_i \frac{\delta_{x_i}-\delta_{y_i}}{d(x_i,y_i)}\|<\varepsilon.
\end{equation}
Let $\mu_0=\sum_{i=1}^{n} \lambda_i \frac{\delta_{x_i}-\delta_{y_i}}{d(x_i,y_i)}.$ Then 
\begin{equation}
\|\mu -\frac{\mu_0}{\|\mu_0\|}\|\leqslant \|\mu - \mu_0\|+ \|\mu_0 - \frac{\mu_0}{\|\mu_0\|}\| <\varepsilon + 1-\|\mu_0\| <2\varepsilon.
\end{equation}
Since $\frac{\mu_0}{\|\mu_0\|}\in S_{\mathcal{F}(M)}$ with finite support, then by \cite[Proposition 3.16]{W}  $\frac{\mu_0}{\|\mu_0\|}\in co \Big \{ \frac{\delta_x - \delta_y}{d(x,y)}: x\neq y, x,y\in M\Big\}.$ 
Also it is easy to observe that $S(B_{Lip_0(M)}, \frac{\mu_0}{\|\mu_0\|} , \alpha - 2\varepsilon)\subset S(B_{Lip_0(M)}, \mu , \alpha).$ Hence, diameter of $S(B_{Lip_0(M)}, \frac{\mu_0}{\|\mu_0\|} , \alpha - 2\varepsilon)$ is greater than $\varepsilon$ gives diameter of $S(B_{Lip_0(M)}, \mu , \alpha)$ is greater than $\varepsilon.$
\end{proof}

We require the following result from \cite{AZ} to prove our desired result.
\begin{lemma}\label{az norm 1 lem}
\cite[Theorem 2.4]{AZ}
Let $M$ be a pointed metric space and $\{(x_i,y_i)\}_{i\in I}$ in $(M\times M)\setminus \{(x,x):x\in M\}$ where $I$ is finite or infinite subset of $\mathbb{N}.$ Then $\|\sum_i \lambda_i \frac{\delta_{x_i} - \delta_{y_i}}{d(x_i,y_i)}\|=1$ for some choice of $\lambda_i >0$ with $\sum_i \lambda_i =1$ if and only if for every finite sequence $i_1,\ldots,i_m$ of indices in $I$ we have 
\begin{equation}
\begin{split}
d(x_{i_1},y_{i_1})+ \ldots + d(x_{i_m},y_{i_m})
\leqslant d(x_{i_1},y_{i_2})+\ldots+ d(x_{i_{m-1}},y_{i_m}) +d(x_{i_m},y_{i_1})
\end{split}
\end{equation}
\end{lemma}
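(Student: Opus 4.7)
The plan is to use Kantorovich--Rubinstein duality to convert the norm-$1$ condition into the existence of a $1$-Lipschitz function attaining equality on every pair $(x_i,y_i)$, and then to invoke Lemma \ref{lem az} to construct that function under the cycle hypothesis. First, since $\mathcal{F}(M)^* = Lip_0(M)$, the unit ball of $Lip_0(M)$ is $w^*$-compact and the sup defining the norm of $\mu := \sum_i \lambda_i \frac{\delta_{x_i}-\delta_{y_i}}{d(x_i,y_i)}$ is attained at some $1$-Lipschitz $f$. Since $f(\mu) = \sum_i \lambda_i \frac{f(x_i)-f(y_i)}{d(x_i,y_i)}$ with each quotient in $[-1,1]$ and $\lambda_i>0$ summing to $1$, the value equals $1$ iff $f(x_i)-f(y_i)=d(x_i,y_i)$ for every $i$. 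Hence the lemma reduces to: such a $1$-Lipschitz $f$ exists iff the cycle inequality holds.

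The forward direction is a clean summation. Given $f$ and a sequence $i_1,\ldots,i_m$, a cyclic re-indexing of the subtrahend gives
\[
\sum_{k=1}^m d(x_{i_k},y_{i_k}) \;=\; \sum_{k=1}^m \bigl(f(x_{i_k})-f(y_{i_k})\bigr) \;=\; \sum_{k=1}^m \bigl(f(x_{i_k})-f(y_{i_{k+1}})\bigr) \;\leq\; \sum_{k=1}^m d(x_{i_k},y_{i_{k+1}}),
\]
indices read cyclically, the last bound from $1$-Lipschitz.

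For the converse, set $S = \{x_i, y_i : i \in I\}$ and seek $\alpha_p \in \mathbb{R}$ for $p \in S$; once these are produced, we extend by McShane to all of $M$ and translate by a constant to achieve $f(0)=0$. We encode the $1$-Lipschitz plus forcing constraints in the form of Lemma \ref{lem az} by taking $\beta_{pq} = d(p,q)$ for all $p, q \in S$ except $\beta_{y_i, x_i} = -d(x_i,y_i)$ for each $i \in I$; indeed $\alpha_p \leq \alpha_q + \beta_{pq}$ then encodes both the $1$-Lipschitz condition and the equality $\alpha_{x_i} - \alpha_{y_i} = d(x_i, y_i)$. Lemma \ref{lem az} produces the required $\alpha$'s provided every $S$-cycle $p_1 \to \cdots \to p_n \to p_1$ has nonnegative total $\beta$-cost.

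The main obstacle is that Lemma \ref{lem az}'s cycle condition ranges over \emph{all} sequences of points in $S$, whereas the hypothesis directly controls only alternating $y$-$x$-$y$-$x$-\ldots\ patterns indexed by $I$. To bridge this, isolate the ``negative'' edges of an arbitrary $S$-cycle---necessarily transitions $y_{i_s} \to x_{i_s}$ contributing $-d(x_{i_s}, y_{i_s})$---and observe that between two consecutive negative edges the remaining edges carry $\beta$-weights equal to honest distances, whose total is therefore at least $d(x_{i_s}, y_{i_{s+1}})$ by the triangle inequality in $M$. Summing gives the lower bound $\sum_s \bigl[d(x_{i_s},y_{i_{s+1}}) - d(x_{i_s}, y_{i_s})\bigr]$ on the cycle cost, which is $\geq 0$ by the hypothesis applied to the reduced sequence $i_1,\ldots,i_r$. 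Hence Lemma \ref{lem az} applies and yields the desired function $f$.
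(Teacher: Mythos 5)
Your proposal is correct, and it is worth noting that the paper itself gives no proof of this statement at all: it is quoted as \cite[Theorem 2.4]{AZ}, so your argument is a self-contained reconstruction rather than a rival to anything in the text. The reduction is sound: $B_{Lip_0(M)}$ is $w^*$-compact, so the norm of $\mu=\sum_i\lambda_i\frac{\delta_{x_i}-\delta_{y_i}}{d(x_i,y_i)}$ is attained at some $1$-Lipschitz $f$, and since a convex combination (with strictly positive weights) of quantities $\leqslant 1$ equals $1$ only if each quantity equals $1$, the problem becomes the existence of a $1$-Lipschitz $f$ with $f(x_i)-f(y_i)=d(x_i,y_i)$ for all $i$; this also explains why ``for some choice of $\lambda_i$'' is the same as ``for every choice.'' The forward direction is the standard cyclic re-indexing.

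For the converse, your encoding differs from the one used in \cite{AZ} and, tellingly, in this paper's own proof of the main theorem, where one indexes by the pairs $i\in I$ and sets $\beta_{ij}=d(x_i,y_j)-d(x_i,y_i)$, $\alpha_i=f(y_i)$; with that choice the cycle condition of Lemma \ref{lem az} is literally the stated inequality and no further reduction is needed. You instead index by the points of $S$, taking $\beta_{pq}=d(p,q)$ except $\beta_{y_ix_i}=-d(x_i,y_i)$, which obliges you to show that an arbitrary $S$-cycle has nonnegative cost. Your reduction --- collapse each run of nonnegative edges between consecutive negative edges to a single distance via the triangle inequality, then apply the hypothesis to the residual alternating cycle $i_1,\ldots,i_r$ --- is valid, including the degenerate cases (empty segments, where $x_{i_s}=y_{i_{s+1}}$ and the bound is $0\geqslant 0$; cycles with no negative edge, whose cost is a sum of distances). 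The price of your encoding is this extra combinatorial step; what it buys is that the $\alpha_p$ produced by Lemma \ref{lem az} are already the values of the desired function on $S$, so one finishes with a McShane extension and a translation instead of verifying by hand that $g(y_i)=\alpha_i$, $g(x_i)=\alpha_i+d(x_i,y_i)$ is $1$-Lipschitz on $S$. Both routes work; yours is acceptable as written.
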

Now we are ready to prove the main aim of this paper.
\begin{theorem}
For a uniformly discrete, bounded point metric space $M,$ the following are equivalent.
\begin{enumerate}
\item Norm of $\mathcal{F}(M)$ is nonrough.
\item $Lip_0(M)$ has $w^*$-$BDP.$ 
\item For each $\varepsilon >0$ there exist $\alpha>0$ and $n\in \mathbb{N}$, $\{(x_i,y_i)\}_{i=1}^{n}$ in $(M\times M)\setminus \{(x,x):x\in M\}$ such that
\begin{enumerate}
\item for every pair of different number $j,k\in \{1,\ldots,n\}$ there exist finite sequence of different numbers $i_1,\ldots,i_m\in \{1,\ldots,n\}$ that contains both $j$ and $k$ and 
\begin{equation}\label{th equ 1}
\begin{split}
d(x_{i_1},y_{i_2})+\ldots+ d(x_{i_{m-1}},y_{i_m}) +d(x_{i_m},y_{i_1})-\varepsilon <
d(x_{i_1},y_{i_1})+ \ldots + d(x_{i_m},y_{i_m})
\end{split}
\end{equation}
\item for every sequence $i_1,\ldots,i_m\in \{1,\ldots,n\}$ we have 
\begin{equation}
\begin{split}
d(x_{i_1},y_{i_1})+ \ldots + d(x_{i_m},y_{i_m})
\leqslant d(x_{i_1},y_{i_2})+\ldots+ d(x_{i_{m-1}},y_{i_m}) +d(x_{i_m},y_{i_1})
\end{split}
\end{equation}
\item for every $x\in M$ there exist $s,t\in \{x_1,y_1,\ldots,x_n,y_n\}$ with $s\neq t$ such that 
 \begin{equation} \label{th eq tran}
d(x,s)+d(x,t)<d(s,t)+\varepsilon.
\end{equation}
 and for every 1-Lipschitz map $f:M\rightarrow \mathbb{R}$ with
$$f(x_i)-f(y_i)>d(x_i,y_i) (1-\alpha)\quad \forall i=1,\ldots,n$$ 
we have
\begin{equation}\label{th eq func tran}
f(t)-f(s)>d(s,t)-\varepsilon.
\end{equation}

\end{enumerate} 
\end{enumerate}
\end{theorem}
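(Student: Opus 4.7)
The plan is to handle the three implications separately, using the two structural lemmas as the main engines.

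The equivalence $(1)\Leftrightarrow(2)$ is essentially for free: the paper already cites that a Banach space has non-rough norm iff its dual has $w^*$-$BDP$, and $\mathcal{F}(M)^*=Lip_0(M)$. So the real content is the equivalence of $(2)$ with the geometric condition $(3)$.

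For $(2)\Rightarrow(3)$, fix $\varepsilon>0$ and invoke $w^*$-$BDP$ together with Lemma \ref{finite supp} to get a small-diameter $w^*$-slice of the form $S(B_{Lip_0(M)},\mu_0,\alpha)$, where $\mu_0=\sum_{i=1}^n \lambda_i \frac{\delta_{x_i}-\delta_{y_i}}{d(x_i,y_i)}$ with $\|\mu_0\|=1$; the diameter will be arranged small compared to $\varepsilon$ and the metric diameter $D$ of $M$. Condition $(3)(b)$ is then exactly Lemma \ref{az norm 1 lem}. For $(3)(a)$, set $I=\{1,\dots,n\}$ and define $\beta_{ij}=d(x_i,y_j)-d(x_i,y_i)$; this is designed so that cyclic sums $\beta_{i_1i_2}+\dots+\beta_{i_mi_1}$ equal the difference of the two sides in \eqref{th equ 1}, and so that $\beta_{ii}=0$. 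Lemma \ref{az norm 1 lem} guarantees that all such cyclic sums are $\ge 0$, so by Lemma \ref{lem az} sequences $(\alpha_i)$ satisfying $\alpha_k\le \alpha_j+\beta_{kj}$ exist; indeed, any $1$-Lipschitz $f$ with $f(x_i)-f(y_i)=d(x_i,y_i)$ (obtained by McShane extension) yields $\alpha_i:=f(y_i)$. For two functions $f_1,f_2$ attaining the maximum on $\mu_0$ to within the slice slack, the resulting $(\alpha'_i)$ and $(\alpha''_i)$ satisfy $|(\alpha'_j-\alpha'_k)-(\alpha''_j-\alpha''_k)|\le \|f_1-f_2\|\,d(y_j,y_k)\le D\cdot\mathrm{diam}(\mathrm{slice})$, which is condition (i) of Lemma \ref{gen az lem}; the implication (i)$\Rightarrow$(ii) in that lemma is then precisely $(3)(a)$. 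Condition $(3)(c)$ comes from the same slice-diameter bound applied pointwise: if no pair $s,t$ in the support $\varepsilon$-sandwiched $x$ in the sense of \eqref{th eq tran} and \eqref{th eq func tran}, one could construct two distinct McShane-type extensions in the slice differing substantially at $x$, contradicting small diameter.

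For the reverse direction $(3)\Rightarrow(2)$, use $(3)(b)$ with Lemma \ref{az norm 1 lem} to build $\mu_0=\sum \lambda_i \frac{\delta_{x_i}-\delta_{y_i}}{d(x_i,y_i)}$ of norm one, and consider the slice $S(B_{Lip_0(M)},\mu_0,\tilde\alpha)$ for a suitable $\tilde\alpha\ll \min_i\lambda_i\cdot\alpha$. Any $f$ in the slice satisfies $f(x_i)-f(y_i)>d(x_i,y_i)(1-\alpha)$ for all $i$. Given two such functions $f_1,f_2$, set $\alpha'_i=f_1(y_i)$, $\alpha''_i=f_2(y_i)$; these nearly satisfy $\alpha'_k\le\alpha'_j+\beta_{kj}$ (with controllable slack absorbed into $\varepsilon$), so Lemma \ref{gen az lem}, in its (ii)$\Rightarrow$(i) direction applied to $(3)(a)$, bounds the oscillation of $f_1-f_2$ over the support points by $O(\varepsilon)$. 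Condition $(3)(c)$ then transfers this control from the support to arbitrary $x\in M$: writing $f_i(x)$ in terms of $f_i(s)$ and $f_i(t)$ via the Lipschitz sandwich $d(s,t)-\varepsilon<f_i(t)-f_i(s)$ and $d(x,s)+d(x,t)<d(s,t)+\varepsilon$ pins down $f_i(x)-f_i(s)$ up to $2\varepsilon$, so $(f_1-f_2)$ has globally small oscillation on $M$. Uniform discreteness of $M$ then upgrades this oscillation bound to a bound on $\|f_1-f_2\|$ by dividing by the positive infimum of $d$, yielding small slice diameter and hence $w^*$-$BDP$.

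The main obstacle I expect is the bookkeeping of slacks in the reductions: the slice width $\alpha$, the Lemma~\ref{gen az lem} parameter $\varepsilon$, the $\delta$ in its statement, and the final $\varepsilon$ in $(3)$ all need to be threaded consistently, and one must also verify that the $\beta_{ij}$ chosen really do encode both the cyclic identity feeding into $(3)(a)$ and the constraint $\alpha_k\le\alpha_j+\beta_{kj}$ coming from $1$-Lipschitz functions near-attaining on $\mu_0$. The rest—$(3)(b)$ via Lemma \ref{az norm 1 lem}, $(3)(c)$ via Lipschitz sandwiching and McShane extensions, and the final step exploiting uniform discreteness—is comparatively mechanical once the dictionary between $\alpha_i$'s and values $f(y_i)$ is fixed.
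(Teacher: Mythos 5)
Your proposal follows essentially the same route as the paper: Lemma~\ref{finite supp} reduces to finitely supported slices, condition (b) is read off from Lemma~\ref{az norm 1 lem}, the choice $\beta_{ij}=d(x_i,y_j)-d(x_i,y_i)$ with $\alpha_i=f(y_i)$ feeds Lemma~\ref{gen az lem} to produce (a), the two extremal (McShane) extensions $h_1,h_2$ of $f$ from the support give (c), and the converse direction controls the oscillation of $f-g$ first on the support via (a) and then on all of $M$ via (c), finally dividing by the uniform-discreteness constant $c$. The one step you should make explicit is that hypothesis (i) of Lemma~\ref{gen az lem} must be verified for \emph{every} pair of admissible sequences, not only those of the form $f(y_i)$ with $f$ in the slice; this requires the converse realization step --- given any $(\alpha'_i)$ with $\alpha'_k\leqslant\alpha'_j+\beta_{kj}$, set $g(y_i)=\alpha'_i$ and $g(x_i)=\alpha'_i+d(x_i,y_i)$ on the support, check that $g$ is $1$-Lipschitz, and extend and translate it into the slice --- which the paper carries out and your sketch only gestures at.
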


\begin{proof}
$(i)\Leftrightarrow (ii)$ is straightforward.

Let $c=\inf \{d(x,y): x,y\in M, x\neq y\}$ and $D=$ diam $(M).$

$(ii)\Rightarrow (iii).$ 
Let $\varepsilon >0.$ Then by Lemma $\ref{finite supp}$ there exist $\sum_{i=1}^{n} \lambda_i \frac{\delta_{x_i} - \delta_{y_i}}{d(x_i,y_i)} \in S_{\mathcal{F}(M)}$ (where $\lambda_i >0$ with $\sum_{i=1}^{n} \lambda_i =1$) 
and $\alpha >0$ such that
 $w^*$ slice $S=S(B_{Lip_0(M)},\sum_{i=1}^{n} \lambda_i \frac{\delta_{x_i} - \delta_{y_i}}{d(x_i,y_i)},\alpha)$  of $B_{Lip_0(M)}$ has diameter less than $\frac{\varepsilon}{2D}.$ 
Since $\|\sum_{i=1}^{n} \lambda_i \frac{\delta_{x_i} - \delta_{y_i}}{d(x_i,y_i)}\|=1,$
then condition (b) will follow directly from Lemma $\ref{az norm 1 lem}.$  
  Choose $f\in S_{Lip_0(M)}$ such that $\sum_{i=1}^{n} \lambda_i \frac{f(x_i) - f(y_i)}{d(x_i,y_i)}=1.$ Therefore $\frac{f(x_i) - f(y_i)}{d(x_i,y_i)}=1$ for all $i=1,\ldots,n.$
For each $i,j\in \{1,\ldots,n\}$ we consider $\beta_{ij}=d(x_i,y_j)-d(x_i,y_i)$ and $\alpha_i=f(y_i).$ Observe that 
\begin{equation}
\alpha_i - \alpha_j=f(y_i)-f(y_j)=f(x_i)-d(x_i,y_i)-f(y_j)\leqslant d(x_i,y_j)-d(x_i,y_i)=\beta_{ij}
\end{equation}
Let $(\alpha '_i)_{i=1}^{n}$ be a finite real sequence such that $\alpha '_i - \alpha '_j\leqslant \beta_{ij}.$ Consider $N=\{x_1,y_1,x_2,y_2,\ldots,x_n,y_n\}.$ Define $g:N\rightarrow \mathbb{R}$ as $g(y_i)=\alpha '_i$ and $g(x_i)=\alpha'_i+d(x_i,y_i).$ Then it is easy to check that $g$ a is Lipschitz map. Therefore by \cite[Theorem 1.33]{W} and using a suitable translation, we will get $g_0\in B_{Lip_0(M)}$ with $\sum_{i=1}^{n} \lambda_i \frac{g_0(x_i) - g_0(y_i)}{d(x_i,y_i)}=1$ and $g_0(0)=0.$ Hence $g_0\in S.$ Thus $\|g_0-f\|_{Lip_0(M)}\leqslant \frac{\varepsilon}{2D}.$ In particular, $|(g_0-f)(y_j)-(g_0-f)(y_k)|<\frac{\varepsilon}{2}$ $\forall j,k.$ It gives $|(\alpha_j-\alpha_k)-(\alpha '_j-\alpha '_k)|<\frac{\varepsilon}{2}$ $\forall j,k.$ Hence from Lemma $\ref{gen az lem}$ 
condition (a) follows.
To prove (c), let us first fix $x_0\in M$ and 
we define 
\begin{equation}\notag
\begin{split}
h_1(x)=\inf_{z\in N} [f(z)+d(z,x)]\\
h_2(x)=\sup_{z\in N} [f(z)-d(z,x)]
\end{split}
\end{equation}

Observe that $h_1$ and $h_2$ are 1-Lipschitz map with $h_1|_N=f=h_2|_N.$ Thus
 $\sum_{i=1}^{n} \lambda_i \frac{h_j(x_i) - h_j(y_i)}{d(x_i,y_i)}>1-\alpha$ $\forall j=1,2.$ Consider a suitable translation $\tilde{h_1}$ and $\tilde{h_2}$ of $h_1$ and $h_2$ so that $\tilde{h_1}(0)=0$ and $\tilde{h_2}(0)=0.$ Then $\tilde{h_1},\tilde{h_2}\in S.$
  Choose $s,t\in N$ such that $h_1(x_0)= f(s)+d(x_0,s)$ and $h_2(x_0)=f(t)-d(x_0,t).$ Then
 \begin{equation}\notag
 \begin{split}
 \frac{\varepsilon}{2} > |\tilde{h_1}(x_0) -\tilde{h_1}(s) -(\tilde{h_2}(x_0)-\tilde{h_2}(s))|
 =|h_1(x_0) -h_1(s)-(h_2(x_0)-h_2(s))|\\
 =|h_1(x_0)-h_2(x_0)|\hspace{2.7 cm}\\
 =d(s,x_0)+d(t,x_0)+f(s)-f(t)\hspace{.5 cm}\\
 \geqslant d(s,x_0)+d(t,x_0)-d(s,t)\hspace{1.2 cm}
 \end{split}
 \end{equation}
 Thus, $d(s,x_0)+d(t,x_0)<d(s,t)+\varepsilon.$ Also 
 \begin{equation}\notag
 f(t)-f(s)=h_2(x_0)-h_1(x_0)+d(t,x)+d(s,x)\geqslant-\frac{\varepsilon}{2}+ d(s,t).
 \end{equation}
Consider an 1-Lipschitz map $h:M\rightarrow \mathbb{R}$ with
$h(x_i)-h(y_i)>d(x_i,y_i) (1-\alpha)$ $\forall i=1,\ldots,n.$ Then we can choose a suitable translation $\tilde{h}$ of $h$ such that $\tilde{h}\in S.$ Thus 
 \begin{equation}\notag
 \begin{split}
h(t)-h(s)=\tilde{h}(t)-\tilde{h}(s) 
= f(t)-f(s)+[\tilde{h}(t)-\tilde{h}(s)-(f(t)-f(s))]\\
\geqslant -\frac{\varepsilon}{2}+ d(s,t) - \|\tilde{h}-f\|_{Lip_0(M)} D\hspace{1.7 cm}\\
\geqslant d(s,t)-\varepsilon.\hspace{5 cm}
\end{split}
 \end{equation}

$(iii)\Rightarrow (ii).$  
Let $\varepsilon >0.$ Then by our assumption there exist $0<\alpha <\varepsilon$, $\{(x_i,y_i)\}_{i=1}^{n}$ in $(M\times M)\setminus \{(x,x):x\in M\}$ and $\lambda_1,\ldots,\lambda_n>0$ with $\sum_{i=1}^{n}\lambda_i=1$ (from Lemma $\ref{az norm 1 lem}$) such that $\|\sum_i \lambda_i \frac{\delta_{x_i} - \delta_{y_i}}{d(x_i,y_i)}\|=1.$ Consider $w^*$ slice $T=S(B_{Lip_0(M)},\sum_i \lambda_i \frac{\delta_{x_i} - \delta_{y_i}}{d(x_i,y_i)},\frac{ \alpha  \min \lambda_i}{n })$ of $B_{Lip_0(M)}.$
Define $N=\{x_1,y_1,x_2,y_2,\ldots,x_n,y_n\}.$
Let $f,g\in T.$ 
Observe that 
\begin{equation}
0\leqslant d(x_i,y_i)-(f(x_i)-f(y_i))<\frac{\alpha}{n} d(x_i,y_i)\leqslant \frac{\alpha}{n} D\quad \forall i=1,\ldots,n.
\end{equation}
Similar is true for $g$ and hence 
\begin{equation}\label{equ 1}
|f(x_i)-f(y_i)-(g(x_i)-g(y_i))|<D \frac{\alpha}{n} <D \frac{\varepsilon}{n} \quad \forall i=1,\ldots,n.
\end{equation}
Let $j,k\in \{1,2\ldots,n\}$ with $j\neq k.$ By assumption there exist finite sequence of different numbers $i_1,\ldots,i_m\in \{1,\ldots,n\}$ that contains both $j$ and $k$ and satisfy $\ref{th equ 1}.$ Then
\begin{equation}\notag
\begin{split}
0\leqslant \sum_{t=1}^{m-1} [d(x_{i_t},y_{i_{t+1}})-(f(x_{i_t})-f(y_{i_{t+1}}))]\hspace{5 cm}\\
<\varepsilon-d(x_{i_m},y_{i_1})+\sum_{t=1}^{m} d(x_{i_t},y_{i_t})-\sum_{t=1}^{m-1}(f(x_{i_t})-f(y_{i_{t+1}}))\hspace{2.5 cm}\\
=\varepsilon-d(x_{i_m},y_{i_1})+\sum_{t=1}^{m} d(x_{i_t},y_{i_t})-\sum_{t=1}^{m}(f(x_{i_t})-f(y_{i_t})) -f(y_{i_1})+f(x_{i_m})\\
<\varepsilon-d(x_{i_m},y_{i_1}) +m \frac{\alpha}{n} D -f(y_{i_1})+f(x_{i_m})\hspace{4.5 cm}\\
\leqslant\varepsilon-d(x_{i_m},y_{i_1}) + \alpha D +d(x_{i_m},y_{i_1}) \hspace{5.3 cm}\\
=\varepsilon + \alpha D\hspace{9.2 cm}\\
<\varepsilon (1+D)\hspace{9 cm}
\end{split}
\end{equation}
Thus we have
\begin{equation}
0\leqslant \sum_{t=r}^{s} [d(x_{i_t},y_{i_{t+1}})-(f(x_{i_t})-f(y_{i_{t+1}}))]<\varepsilon (1+D) \quad \forall r\leqslant s,\ r,s=1,\ldots,m-1.
\end{equation}
Similar is also true for $g$ and hence 
\begin{equation}\label{equ 2}
|\sum_{t=r}^{s} [f(x_{i_t})-f(y_{i_{t+1}})-(g(x_{i_t})-g(y_{i_{t+1}}))]|<\varepsilon (1+D) \quad \forall r\leqslant s,\ r,s=1,\ldots,m-1.
\end{equation}
Without loss of generality we can assume that $j=i_p$, $k=i_q$ and $p> q$ for some $p,q\in \{1,\ldots,m\}.$
By using $\ref{equ 1}$ and $\ref{equ 2}$ we have 
\begin{equation}\label{sp equ 1}
\begin{split}
|(f-g)(x_j)-(f-g)(x_k)|
\leqslant |\sum_{t=q+1}^{p} (f-g)(x_{i_t})-(f-g)(y_{i_t})|\\
+|\sum_{t=q+1}^{p} (f-g)(y_{i_t})-(f-g)(x_{i_{t-1}})|\\
<n \frac{D\varepsilon}{n}+\varepsilon (1+D)\hspace{2.7 cm}\\
=\varepsilon + 2D\varepsilon \hspace{4 cm}
\end{split}
\end{equation}
\begin{equation}\label{sp equ 2}
\begin{split}
|(f-g)(y_j)-(f-g)(y_k)|
\leqslant \sum_{t=q+1}^{p}|(f-g)(y_{i_t})-(f-g)(x_{i_{t-1}})|\\
+\sum_{t=q+1}^{p}|(f-g)(x_{i_{t-1}})-(f-g)(y_{i_{t-1}})|\\
<\varepsilon (1+D) + n \frac{D\varepsilon}{n}\hspace{2.7 cm}\\
=\varepsilon + 2D\varepsilon \hspace{4 cm}
\end{split}
\end{equation}
\begin{equation}\label{sp equ 3}
\begin{split}
|(f-g)(x_j)-(f-g)(y_k)|
\leqslant \sum_{t=q}^{p}|(f-g)(x_{i_t})-(f-g)(y_{i_t})|\\
+\sum_{t=q+1}^{p}|(f-g)(y_{i_t})-(f-g)(x_{i_{t-1}})|\\
<n \frac{D\varepsilon}{n}+\varepsilon (1+D)\hspace{2.7 cm}\\
=\varepsilon + 2D\varepsilon \hspace{4 cm}
\end{split}
\end{equation}
\begin{equation}\label{sp equ 4}
\begin{split}
|(f-g)(y_j)-(f-g)(x_k)|
\leqslant \sum_{t=q}^{p}|(f-g)(y_{i_t})-(f-g)(x_{i_{t-1}})|\\
+\sum_{t=q+1}^{p}|(f-g)(x_{i_{t-1}})-(f-g)(y_{i_{t-1}})|\\
<\varepsilon (1+D) + n \frac{D\varepsilon}{n}\hspace{2.7 cm}\\
=\varepsilon + 2D\varepsilon \hspace{4 cm}
\end{split}
\end{equation}

Thus from $\ref{equ 1}$, $\ref{sp equ 1}$, $\ref{sp equ 2}$, $\ref{sp equ 3}$ and $\ref{sp equ 4}$
we can conclude that
\begin{equation}
|(f-g)(u)-(f-g)(v)|<\varepsilon (1+2D)\quad \forall u,v\in N
\end{equation}

Let $x,y\in M$ such that $x\neq y.$ Then for $x\in M$ there exist $s,t\in N$ with $s\neq t$ satisfy $\ref{th eq tran}$ and $\ref{th eq func tran}.$ Observe that
\begin{equation}\notag
\begin{split}
\frac{d(x,t)}{d(x,s)+d(x,t)} \frac{f(t)-f(x)}{d(x,t)} + \frac{d(x,s)}{d(x,s)+d(x,t)} \frac{f(x)-f(s)}{d(x,s)}\\
=\frac{f(t)-f(s)}{d(x,s)+d(x,t)} \hspace{6.5 cm}\\
>\frac{d(s,t)-\varepsilon}{d(s,t)+\varepsilon}\hspace{7.5 cm}\\
\geqslant 1-\frac{2\varepsilon}{c+\varepsilon}\hspace{7.5 cm}.
\end{split}
\end{equation}
Thus 
\begin{equation}
1\geqslant\frac{f(t)-f(x)}{d(x,t)}  > 1-\frac{d(x,s)+d(x,t)}{\min\{d(x,s),d(x,t)\}} \frac{2\varepsilon}{c+\varepsilon} \geqslant 1-\frac{2D}{c} \frac{2\varepsilon}{c+\varepsilon}
\end{equation}
Similarly is also true for $g$ and hence 
\begin{equation}
|\frac{f(t)-f(x)}{d(x,t)} -\frac{g(t)-g(x)}{d(x,t)}|\leqslant \frac{4 D \varepsilon}{c(c+\varepsilon)}
\end{equation}
Therefore, 
\begin{equation}
|f(t)-f(x) -(g(t)-g(x))|\leqslant \frac{4 D^2 \varepsilon}{c(c+\varepsilon)}
\end{equation}
\end{proof}
Similarly for $y\in M$ there exists $t'\in N$ such that 
\begin{equation}
|f(t')-f(y) -(g(t')-g(y))|\leqslant \frac{4 D^2 \varepsilon}{c(c+\varepsilon)}
\end{equation}
Finally, 
\begin{equation}\notag
\begin{split}
\frac{|(f-g)(x)-(f-g)(y)|}{d(x,y)}
\leqslant \frac{1}{c} [|(f-g)(x)-(f-g)(t)|+|(f-g)(t)-(f-g)(t')|\\
+|(f-g)(t')-(f-g)(y)|]\\
\leqslant \frac{8 D^2 \varepsilon}{c(c+\varepsilon)} + \varepsilon (1+2D).\hspace{5.2 cm}
\end{split}
\end{equation}
Since $f,g$ are any arbitrary choosen element from $S,$ hence $S$ has diameter less than $\frac{8 D^2 \varepsilon}{c(c+\varepsilon)} + \varepsilon (1+2D).$ Therefore $Lip_0(M)$ has $w^*$-$BDP.$

  \begin{Acknowledgement}
The second  author's research is funded by the National Board for Higher Mathematics (NBHM), Department of Atomic Energy (DAE), Government of India, Ref No: 0203/11/2019-R$\&$D-II/9249.
\end{Acknowledgement}

\end{document}